	\newtheorem{theorem}{Theorem}[section]
	\newtheorem{prop}[theorem]{Proposition}
	\newtheorem{cor}[theorem]{Corollary}
	\theoremstyle{definition}
	\newtheorem{definition}[theorem]{Definition}
\newtheorem{remark}[theorem]{Remark}
\newtheorem{notation}[theorem]{Notation}
\theoremstyle{remark}
\newcounter{step}
\newcommand{\Q}{{\mathbb Q}}
\newcommand{\bP}{{\mathbb P}}
\newcommand{\bQ}{{\mathbb Q}}
\newcommand{\bF}{{\mathbb F}}
\newcommand{\calO}{\mathcal O}
\newcommand{\calM}{\mathcal M}
\newcommand{\calE}{\mathcal E}
\newcommand{\calC}{\mathcal C}
\newcommand{\calR}{\mathcal R}
\newcommand{\calQ}{\mathcal Q}
\newcommand{\calD}{\mathcal D}
\newcommand{\mb}[1]{\mathbb{#1}}
\title{Moduli of double covers and degree one del Pezzo surfaces} 
\author{Kenneth Ascher \& Dori Bejleri}
\begin{document}

\maketitle

\begin{abstract}
Given a degree one del Pezzo surface with canonical singularities, the linear series generated by twice the anti-canonical divisor exhibits the surface as the double cover of the quadric cone branched along a sextic curve. It is natural to ask if this description extends to the boundary of a compactification of the moduli space of degree one del Pezzo surfaces. The goal of this paper is to show that this is indeed the case.  In particular, we give an explicit classification of the boundary of the moduli space of anti-canonically polarized broken del Pezzo surfaces of degree one as double covers of degenerations of the quadric cone. 
\end{abstract}

\section{Introduction}
The anti-canonical linear series of del Pezzo surfaces have a rich geometric structure. For degree one del Pezzo surfaces, the linear series $|-K_X|$ is a pencil of elliptic curves with a unique base point, and the blowup of this base point gives a rational elliptic surface with a section. On the other hand, the linear system $|-2K_X|$ exhibits the surface $X$ as a double cover of $\bP(1,1,2)$, i.e. the quadric cone, branched along a sextic curve. At the same time, understanding modular compactifications of the space of degree one del Pezzo surfaces is a problem with a long history (see e.g. \cite{ishii, sekiguchi, miranda, hl, oss, ade}). In \cite{rational}, we construct and describe an explicit modular compactification $\calR$ of this space using the theory of stable pairs. The boundary of this space parametrizes anti-canonically polarized broken del Pezzo surfaces of degree one -- slc surfaces $X$ such that $K_X$ is anti-ample and $K_X^2 = 1$ (see \cite[Theorem 1.1]{rational}). 

In light of this, it is natural to ask whether the description of a degree one del Pezzo surface as a double cover of $\bP(1,1,2)$ extends along the boundary of this moduli space. That is, are anti-canonically polarized broken del Pezzo surface of degree one also double covers of some degenerations of $\bP(1,1,2)$? The main goal of this paper is to show that this is indeed the case.

Let $\calD^{1,s} \subset \calR$ denote the moduli stack of degree one del Pezzo surfaces with canonical singularities. If $\calQ_3$ denotes the stack of pairs $(Q,C)$, where $Q$ is a quadric cone in $\bP^3$ and $C \subset Q$ is a complete intersection of $Q$ with a cubic, then there exists a smooth and separated substack $\calQ^s_3 \subset \calQ_3$ and a map $\calD^{1,s} \to \calQ^s_3$ which is an isomorphism up to taking the relative coarse moduli space (see Corollary \ref{cor:substack}). In Section \ref{sec:stablepairs} we will define a suitable compactification $\calQ^s_3 \subset \overline{\calQ}$ parametrizing pairs on a singular quadric cone in $\bP^3$ (see Definition \ref{def:compactification}).

\begin{theorem}[see Theorem \ref{thm:maintheorem}]\label{thm:main}
There is a separated morphism $\calR \to \overline{\calQ}$ which extends the morphism $\calD^{1,s} \to \calQ^s_3$. This morphism is induced by a double cover map from the universal family of broken del Pezzo surfaces of degree one to a family of quadric cones branced along a family of complete intersections with a cubic. Moreover, the induced morphism on the relative coarse moduli space $\calR^c \to \overline{\calQ}$ is a monomorphism. 
\end{theorem}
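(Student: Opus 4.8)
The plan is to construct the morphism $\calR \to \overline{\calQ}$ directly on universal families by carrying out the double cover construction relatively, and then to deduce separatedness and the monomorphism property from the fact that a broken del Pezzo surface is recovered from its image pair $(Q,C)$. First I would set up the relative geometry. Let $\pi \colon \calX \to \calR$ be the universal family, with relatively anti-ample dualizing sheaf $\omega_{\calX/\calR}$. On a single fiber $X$ one has $h^0(-K_X)=2$ and $h^0(-2K_X)=4$, and the anti-canonical ring realizes $X$ as a degree six hypersurface $\{w^2 = z^3 + f_4 z + f_6\}$ in $\bP(1,1,2,3)$; forgetting the degree three coordinate $w$ exhibits $X$ as the double cover of the quadric cone $Q = \bP(1,1,2)$ branched along the sextic $\{z^3 + f_4 z + f_6 = 0\}$, which is cut out on $Q \hookrightarrow \bP^3$ by a cubic. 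The first task is to propagate this over all of $\calR$: I would show that the sheaves $\pi_* \omega_{\calX/\calR}^{-n}$ are locally free of the expected ranks with formation commuting with base change, so that the relative anti-canonical algebra $\calA = \bigoplus_{n \ge 0} \pi_* \omega_{\calX/\calR}^{-n}$ realizes $\calX$ as a family of hypersurfaces inside a weighted projective bundle with fiber $\bP(1,1,2,3)$.

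Next I would produce the target family and the double cover. The involution $w \mapsto -w$ is the relative Bertini involution $\sigma$ acting on $\calX$ over $\calR$; its quotient is a $\bP(1,1,2)$-bundle $\calQ \to \calR$, i.e. a family of (possibly degenerate) quadric cones, together with a finite degree two map $\calX \to \calQ$ whose branch divisor $\calC \subset \calQ$ is the relative zero locus of $z^3 + f_4 z + f_6$. Embedding $\calQ$ into a $\bP^3$-bundle by the relative linear system of degree two sections (which recovers $\pi_* \omega_{\calX/\calR}^{-2}$ on the quotient) presents each fiber as a quadric cone in $\bP^3$, with $\calC$ the complete intersection of this quadric with a cubic. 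By the moduli interpretation of $\overline{\calQ}$ in Definition \ref{def:compactification}, the pair $(\calQ \to \calR,\ \calC)$ defines the desired morphism $\calR \to \overline{\calQ}$, and over the open locus $\calD^{1,s}$ it recovers the map $\calD^{1,s} \to \calQ^s_3$ of Corollary \ref{cor:substack} by construction.

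It then remains to check the two properties. Separatedness of $\calR \to \overline{\calQ}$ follows from the cancellation property of separated morphisms, since $\calR$ is separated over the base. For the monomorphism statement, the key point is that a broken del Pezzo surface is recovered as the double cover of $Q$ branched along $C$, so that the pair $(Q,C)$ determines the surface uniquely. Passing to the relative coarse moduli space $\calR^c$ rigidifies the generic stabilizer generated by the Bertini involution $\sigma$, and after this rigidification the reconstruction shows that $\calR^c(T) \to \overline{\calQ}(T)$ is fully faithful for every scheme $T$; equivalently, the diagonal of $\calR^c \to \overline{\calQ}$ is an isomorphism.

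The main obstacle I anticipate lies entirely at the boundary. Away from $\calD^{1,s}$ the fibers are slc rather than canonical, and I must verify that the local freeness and base change of the sheaves $\pi_* \omega_{\calX/\calR}^{-n}$, the embedding into the $\bP(1,1,2,3)$-bundle, and the existence of the relative involution $\sigma$ all persist when the surface is broken. This will require the explicit stratification of the boundary from \cite{rational}: one must check, stratum by stratum, that the degenerate surfaces are still presented as weighted hypersurfaces carrying the involution $w \mapsto -w$, that the relevant higher cohomology vanishes so the pushforwards behave well, and that the resulting degenerate quadric cones and sextic branch curves are precisely the objects parametrized by $\overline{\calQ}$. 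The reconstruction argument underlying the monomorphism must likewise be verified to remain valid for double covers of these degenerate quadric cones.
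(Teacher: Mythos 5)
Your high-level architecture (construct the double cover relatively, deduce separatedness by cancellation, get the monomorphism by reconstructing the surface from its branch data) parallels the paper's proof of Theorem \ref{thm:maintheorem}, but the mechanism you propose for producing the double cover is not just unverified at the boundary --- it is false there, and the boundary is where the theorem has all of its content. You want to present the universal family inside a $\bP(1,1,2,3)$-bundle via the relative anti-canonical ring, with the cover arising as the quotient by $w \mapsto -w$ and the target a ``$\bP(1,1,2)$-bundle.'' For a broken surface $X = Y_1 \cup_G Y_2$ this presentation cannot exist: one still has $h^0(-K_X)=2$, but the two generating sections vanish on complementary components (each restricts on $Y_i$ to the section cutting out the rigid half-fiber $G_i$ and to zero on the other component), so their product is zero in $H^0(-2K_X)$. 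Hence $x^2, xy, y^2$ span only a $2$-dimensional subspace of the $4$-dimensional $H^0(-2K_X)$, the anti-canonical ring needs \emph{two} new degree-two generators rather than one, and the rank of the image quadric drops to two: the image of $\varphi_{|-2K_X|}$ is the non-normal cone $\bP^2 \cup_l \bP^2$ (exactly as Theorem \ref{thm:doublecover} asserts), whereas any degree-six hypersurface in $\bP(1,1,2,3)$ avoiding the point $[0{:}0{:}0{:}1]$ projects onto the \emph{normal} cone $\bP(1,1,2)$, i.e.\ the cone over a smooth conic. So the ``stratum by stratum verification'' you defer to your final paragraph would fail as stated. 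The paper fills precisely this hole by a different route: Theorem \ref{thm:doublecover} constructs the double cover on broken surfaces by hand, building each component as a double cover of $\bP^2$ branched along a line plus a cubic and gluing via Koll\'ar's gluing theory; the proof of Theorem \ref{thm:maintheorem} then needs only the slc vanishing $H^1(X,\calO_X(-2K_X))=0$ and cohomology-and-base-change to conclude that this explicit rank-four linear series is all of $|-2K_X|$, that $\calO_X(-2K_{X/T})$ is relatively globally generated, and hence that the cover exists in families --- no statement about the full anti-canonical ring or a relative involution is ever required.

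There is a second, smaller gap in how you land in the target. By Definition \ref{def:compactification}, $\overline{\calQ}$ is the closure of the graph of $\calQ_3^s \to \calM_v$ inside $\calQ_3 \times \calM_v$, so a family of pairs $(\calQ,\calC)$ only supplies the $\calQ_3$-coordinate; one also needs the stable-pair coordinate $(Q,L)$ of the quadric marked with its lines, and an argument that the resulting map factors through the closure of the graph. The paper obtains this by combining the morphism $\calR \to \calQ_3$ with the stable-pair structure already present on $\calR$ to get $\calR \to \calQ_3 \times \calM_v$, and then uses density of $\calD^{1,s}$ in $\calR$ to factor through $\overline{\calQ}$. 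Your separatedness step agrees with the paper's, and your monomorphism step is the same reconstruction idea the paper uses (the paper additionally notes that finiteness of the relative inertia is what guarantees the relative coarse space $\calR^c$ exists, citing \cite[Theorem 3.1]{aov}); but both of these rest on the relative double cover construction, so they inherit the gap above.
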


In \cite[Section 7.1]{rational}, we gave an explicit classification of the broken del Pezzo surfaces on the boundary of $\calR$. In this paper we explicitly show that the linear series $|-2K_X|$ on a broken degree one del Pezzo induces a double cover map $X \to Q$ of a quadric cone $Q \subset \mathbb{P}^3$, where now $Q = \mathbb{P}^2 \cup_l \mathbb{P}^2$ is the cone over two lines. We note that the proof of Theorem \ref{thm:doublecover} is constructive, in that we show explicitly how each surface on the boundary of $\calR$ can be realized as a double cover of $Q$ and that the double cover is induced by $|-2K_X|$ (see Remark \ref{rmk:table} and Table \ref{table:fibers}).

We note that Deopurkar-Han \cite{DH} recently studied the moduli space of $(3,3)$ curves on $\bP^1 \times \bP^1$ and their $\bQ$-Gorenstein smoothings. Their space contains a boundary divisor corresponding to pairs of a quadric cone with a sextic curve which is birational to $\calR$ by Theorem \ref{thm:main}.\\


We work over an algebraically field of characteristic zero.



\subsection*{Acknowledgements} We thank J\'anos Koll\'ar for asking us this question and for helpful conversations. This work was partially completed while the authors were in residence at MSRI in Spring 2019 (NSF No. DMS-1440140). Both authors supported in part by NSF Postdoctoral Fellowships.

\section{Moduli of degree one del Pezzo surfaces}\label{sec:background}

We recall some background on del Pezzo surfaces of degree one with canonical singularities. 

\begin{definition} A degree $n$ del Pezzo surface is a surface $X$ with canonical singularities such that $-K_X$ is ample and $K_X^2 = n$. 
\end{definition} 

For $n \geq 2$, the divisor $-K_X$ is very ample and the linear series $|-K_X|$ embeds $X$ into $\mathbb{P}^n$. In the degree $n = 1$ case, which is the focus of this paper, the linear series $|-K_X|$ is a pencil of elliptic curves with a unique base point. On the other hand, the following is well known: 

\begin{prop}\label{prop:-2K} For degree $n =1$ the morphism induced by the linear system $|-2K_X|$,
$$
\varphi_{|-2K_X|} : X \to \mb{P}^3
$$
is basepoint free and exhibits $X$ as a double cover of a quadric cone branched along the complete intersection with a cubic surface. 
\end{prop}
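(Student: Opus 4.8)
The plan is to analyze the graded ring $R = \bigoplus_{m \geq 0} H^0(X, -mK_X)$ and read off the structure of $|-2K_X|$ from its generators and relations. First I would compute the dimensions of the relevant cohomology groups. Since $X$ has canonical (hence rational Gorenstein) singularities and $-K_X$ is ample with $K_X^2 = 1$, Riemann–Roch gives $h^0(-mK_X) = \frac{1}{2}(-mK_X)\cdot(-mK_X - K_X) + \chi(\calO_X) = \frac{1}{2}m(m+1) + 1$, using $\chi(\calO_X)=1$ and the vanishing of higher cohomology of $-mK_X$ for $m \geq 0$ (Kawamata–Viehweg, since $-mK_X - K_X = -(m+1)K_X$ is ample for $m \geq 0$). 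This yields $h^0(-K_X)=2$, $h^0(-2K_X)=4$, $h^0(-3K_X)=7$, and $h^0(-6K_X)=22$.

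Next I would exhibit the generators. Writing $x_0,x_1$ for a basis of $H^0(-K_X)$, the three products $x_0^2, x_0x_1, x_1^2$ span a $3$-dimensional subspace of the $4$-dimensional $H^0(-2K_X)$, so there is a new section $y$ in degree $2$; together $x_0,x_1,y$ define the map $\varphi = \varphi_{|-2K_X|}\colon X \to \bP^3$ (after checking base-point freeness). The key structural fact is that the image lies on a quadric cone: the four monomials $x_0^2, x_0x_1, x_1^2, y$ of weighted degree $2$ satisfy the single relation expressing that $y$ is independent while $x_0^2x_1^2 = (x_0x_1)^2$, i.e. the image satisfies a rank-three quadric $Q$, which is precisely the cone $\bP(1,1,2)$ embedded by $\calO(2)$. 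I would verify base-point freeness by noting that the base locus of $|-K_X|$ is a single point $p$, and that a general member of $|-2K_X|$ (in particular $y$) does not pass through $p$, so the sections $x_0,x_1,y$ have no common zero.

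To identify the cover as double and branched along a sextic, I would count dimensions in degrees $3$ and $6$. In degree $3$ the products of $x_0,x_1$ with $x_0,x_1,y$ give a subspace of $H^0(-3K_X)$, and comparing with $h^0(-3K_X)=7$ shows $R$ is \emph{not} generated in degree $\leq 2$: there is a new generator $z$ in degree $3$. Since $\varphi$ is finite onto $Q$ and $\deg(-2K_X) = 4 = 2\cdot\deg_Q$ where the quadric cone has degree $2$, the map is generically two-to-one; the element $z$ provides the degree-two extension, and the relation it satisfies has the form $z^2 = F_6(x_0,x_1,y)$ for a weighted-degree-$6$ polynomial $F_6$, obtained by comparing the $22$-dimensional space $H^0(-6K_X)$ with the span of monomials in $x_0,x_1,y$ (of dimension $h^0(\calO_Q(3))$) together with $z\cdot(\text{degree }3)$ and $z^2$. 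This exhibits $X = \{z^2 = F_6\}$ as the double cover of $Q$ branched along $\{F_6 = 0\}$, which is the intersection of $Q$ with the cubic $\{F_6 = 0\}$ when we regard $F_6$ as cut out by a cubic in the ambient $\bP^3$ coordinates.

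The main obstacle I expect is the careful identification of the relation $z^2 = F_6$ and, in particular, checking that the branch divisor is genuinely the complete intersection of $Q$ with a cubic surface in $\bP^3$ rather than merely an abstract sextic on the cone. This requires tracking the weighted grading against the standard grading on $\bP^3$: the coordinate $y$ has $\calO_{\bP^3}$-degree $1$ but $-K_X$-weight $2$, so a cubic surface in $\bP^3$ restricts to a section of $\calO_Q(3)$ of weighted degree $6$, matching $F_6$. Verifying this dictionary—and that $\varphi$ is finite (not merely generically finite), which follows from ampleness of $-2K_X$ and properness of $X$—constitutes the technical heart of the argument.
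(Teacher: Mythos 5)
Your route is the same as the paper's for the part the paper actually argues: the paper's proof consists exactly of your first two steps --- Riemann--Roch plus Kodaira vanishing to get $h^0(-2K_X)=4$, a basis $x^2, xy, y^2, z$ (your $x_0^2, x_0x_1, x_1^2, y$), and the observation that the resulting map to $\bP^3$ has image the rank-three quadric $x_0x_2 - x_1^2 = 0$. The paper stops there and defers the remaining claims (basepoint freeness, the double-cover structure, the branch locus, and the canonical-singularities case) to Kosta's work. Your continuation via the full anticanonical ring --- a new generator in weighted degree $3$, the count $16 + 6 + 1 = 23 > 22 = h^0(-6K_X)$ forcing a single relation which after completing the square reads $z^2 = F_6(x_0,x_1,y)$, and the identification of $F_6$ with the restriction of a cubic via surjectivity of $H^0(\calO_{\bP^3}(3)) \to H^0(\calO_Q(3))$ --- is the standard presentation of $X$ as a sextic hypersurface in $\bP(1,1,2,3)$, and your dimension counts are correct. (To make the degree-$6$ step airtight you must also check that the unique relation genuinely involves $z^2$, equivalently that $z$ does not lie in the subfield $K(Q) \subset K(X)$; this follows because $-K_X$ ample gives $X = \Proj R$, so the generators must separate the two sheets of $\varphi$ --- you flag this region as the ``technical heart,'' which is fair.)

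The one genuine gap is your basepoint-freeness step, which as written is circular. Since $x_0^2$, $x_0x_1$, $x_1^2$ all vanish at the base point $p$ of $|-K_X|$, the assertion that ``a general member of $|-2K_X|$ (in particular $y$) does not pass through $p$'' is not something you can arrange by choosing $y$: whether \emph{any} section of $-2K_X$ is nonzero at $p$ is precisely the statement that $p$ is not a base point, i.e.\ the thing to be proved. The standard repair is to restrict to a general member $E \in |-K_X|$, an irreducible arithmetic-genus-one curve smooth at $p$ with $\calO_E(-K_X|_E) \cong \calO_E(p)$ (using $K_X^2 = 1$); the sequence $0 \to \calO_X(-K_X) \to \calO_X(-2K_X) \to \calO_E(2p) \to 0$ together with $H^1(\calO_X(-K_X)) = 0$ (Kawamata--Viehweg again) shows that $H^0(-2K_X) \to H^0(\calO_E(2p))$ is surjective, and the degree-two system $|2p|$ on $E$ is basepoint free, so some section of $-2K_X$ does not vanish at $p$. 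With that inserted, your argument goes through; the remaining finiteness claim (needed so that the image is the whole cone and the generic degree equals $(-2K_X)^2/\deg Q = 2$) follows from ampleness of $-2K_X$, as you note.
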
 

If we write $H^0(-K_X) = \langle x, y \rangle$, then one can readily compute using Kodaira vanishing and Riemann-Roch that $h^0(-2K_X) = 4$ so that
$
H^0(-2K_X) = \langle x^2, xy, y^2,z\rangle
$
for some section $z$. This induces a map to $\mathbb{P}^3$ with image given by $x_0x_2 - x_1^2 = 0$.

\begin{remark}We note that interpretations of $|-K_X|$ and $|-2K_X|$ as an elliptic pencil and double cover respectively are also known to be true in the context of del Pezzo surfaces of degree one with \emph{canonical singularities}. For the proofs of these statements we refer the reader to the work of Kosta \cite[Section 2.1]{kosta}.\end{remark}

We now show that the construction given in Proposition \ref{prop:-2K} can also be done in families. Before doing so, we set up some notation. If $\pi: X \to T$ desnotes a flat family of anti-canonically polarized degree one del Pezzo surfaces, we let $\mathbb{P} = \mathbb{P}(\calE)$ denote the $\mathbb{P}^3$ bundle given by $\calE := \pi_*\calO_X(-2K_{X/T})$. 

\begin{prop}\label{prop:cover} Let $\pi : X \to T$ be a flat family of anti-canonically polarized degree one del Pezzo surfaces. Then the line bundle $\calO_X(-2K_{X/T})$ is globally generated over $T$, and the induced morphism $\varphi : X \to \mathbb{P}$ is a $2$-to-$1$ map onto a family $Q \to T$  of singular quadrics embedded in  $\mathbb{P}$ over $T$, branched over a family of complete intersections with a cubic. 
\end{prop}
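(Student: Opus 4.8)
The plan is to upgrade the fiberwise statement of Proposition \ref{prop:-2K} to families by cohomology and base change, and then to produce the quadric and the branch cubic as relative objects over $T$. First I would record the fiberwise cohomology: on each fiber $X_t$ the divisor $-2K_{X_t} = K_{X_t} + (-3K_{X_t})$ is a dualizing sheaf twisted by an ample divisor, so Kodaira vanishing gives $h^i(X_t, \calO(-2K_{X_t})) = 0$ for $i>0$, while Riemann--Roch gives $h^0(X_t,\calO(-2K_{X_t})) = \chi(-2K_{X_t}) = 3K_{X_t}^2 + 1 = 4$, constant in $t$. By cohomology and base change it follows that $\calE = \pi_*\calO_X(-2K_{X/T})$ is locally free of rank $4$, that its formation commutes with base change, and that $R^i\pi_*\calO_X(-2K_{X/T}) = 0$ for $i>0$; hence $\mathbb{P} = \mathbb{P}(\calE)$ is a $\bP^3$-bundle over $T$ whose fiber over $t$ is the $\bP^3$ of Proposition \ref{prop:-2K}. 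The evaluation map $\pi^*\calE \to \calO_X(-2K_{X/T})$ is then surjective, since by Nakayama it suffices to check surjectivity on each fiber, where it is exactly the fiberwise global generation of Proposition \ref{prop:-2K}. Thus $\calO_X(-2K_{X/T})$ is globally generated over $T$ and we obtain a $T$-morphism $\varphi : X \to \mathbb{P}$ with $\varphi^*\calO_{\mathbb{P}}(1) = \calO_X(-2K_{X/T})$ restricting on each fiber to $\varphi_{|-2K_{X_t}|}$.

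Next I would exhibit the image as a family of quadric cones. The same vanishing-and-Riemann--Roch computation shows $\pi_*\calO_X(-4K_{X/T})$ is locally free and base-change compatible, so the multiplication map $\mu : \Sym^2\calE \to \pi_*\calO_X(-4K_{X/T})$ is a map of vector bundles whose fiberwise kernel is spanned by the single relation $x_0x_2 - x_1^2$ and hence has constant rank $1$; therefore $\ker\mu =: \calL$ is a line subbundle of $\Sym^2\calE = p_*\calO_{\mathbb{P}}(2)$ (where $p : \mathbb{P}\to T$), and the induced section of $\calO_{\mathbb{P}}(2)\otimes p^*\calL^{-1}$ cuts out a family $Q \subset \mathbb{P}$ of rank-$3$, hence singular, quadrics. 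Geometrically, setting $\calF = \pi_*\calO_X(-K_{X/T})$ (locally free of rank $2$, since $h^0(X_t,-K_{X_t}) = K_{X_t}^2+1 = 2$), the inclusion $\Sym^2\calF \hookrightarrow \calE$ is a fiberwise-injective subbundle and $Q$ is the relative cone over the conic bundle $\mathbb{P}(\calF)\hookrightarrow\mathbb{P}(\Sym^2\calF)$ embedded by the relative second Veronese. Since $\varphi^*$ carries the quadric relation to $\mu$, which vanishes on $\ker\mu$, the morphism $\varphi$ factors through $Q$; and $\varphi : X \to Q$ is proper and quasi-finite, hence finite, of fiberwise degree $2$.

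Finally I would identify the branch locus. The morphism $\varphi : X \to Q$ is flat over the smooth locus of $Q$ (i.e.\ away from the vertex section of the cone), where it decomposes as $\varphi_*\calO_X = \calO_Q \oplus \calN^{-1}$ for a rank-$1$ sheaf $\calN$, with branch divisor $B$ and double-cover canonical bundle formula $\omega_{X/T} = \varphi^*(\omega_{Q/T}\otimes\calN)$. Using $\varphi^*\calO_Q(1) = \calO_X(-2K_{X/T})$ together with $\omega_{Q_t} = \calO_{Q_t}(-2)$ for the quadric cone, this forces fiberwise $\varphi^*\calN = \calO(-3K_{X_t})$ and hence $\calN^{\otimes 2} = \calO_{Q_t}(3)$, so that $B_t \in |\calO_{Q_t}(3)|$; the same conclusion follows directly from the Weierstrass model $X_t \subset \bP(1,1,2,3)$, $\{w^2 = f_6\}$, whose branch sextic $\{f_6 = 0\}$ lies in $|\calO_{Q_t}(3)|$. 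Because $\calO_Q(3) = \calO_{\mathbb{P}}(3)|_Q$ is a genuine line bundle with $h^0(Q_t,\calO_{Q_t}(3))$ constant, $p_*\calO_Q(3)$ is base-change compatible and the section cutting out $B$ is a relative cubic $G$, exhibiting $B = Q\cap\{G=0\}$ as a family of complete intersections of $Q$ with a cubic, as claimed.

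I expect the main obstacle to be the vertex of the cone in this last step. Since $\calO_{Q_t}(1) = \calO_{\bP(1,1,2)}(2)$ and the $A_1$ vertex makes odd twists non-Cartier, $\calN$ is only a reflexive rank-$1$ sheaf and $\varphi$ fails to be flat along the vertex section, so one must either work on the smooth locus and extend across the vertex by normality and reflexivity, or argue throughout from the Weierstrass model, and in either approach verify that $B$ is a relative Cartier divisor in the \emph{Cartier} class $\calO_Q(3)$ that stays flat over $T$. By contrast, the local-freeness and relative global-generation bookkeeping of the first two paragraphs is routine once the fiberwise dimensions are pinned down and seen to be constant.
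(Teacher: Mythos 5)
Your proposal is correct, and its first paragraph is precisely the paper's entire proof: Kodaira vanishing on fibers, cohomology and base change to get $\calE = \pi_*\calO_X(-2K_{X/T})$ locally free of rank four and base-change compatible, fiberwise surjectivity of the evaluation map $\pi^*\calE \to \calO_X(-2K_{X/T})$ via Proposition \ref{prop:-2K}, and the induced $T$-morphism to $\mathbb{P}(\calE)$. Where you diverge is that the paper stops there, asserting only that the morphism ``fiberwise is a double cover of a quadric cone as above,'' whereas you go on to construct the relative objects that the statement actually asserts: the family $Q \subset \mathbb{P}$ as the zero locus of the line subbundle $\ker\bigl(\Sym^2\calE \to \pi_*\calO_X(-4K_{X/T})\bigr)$ (constant fiberwise rank one, spanned by $x_0x_2 - x_1^2$), the factorization $X \to Q$ with $\varphi$ proper and quasi-finite hence finite of degree two, and the branch divisor as a relative member of $|\calO_Q(3)|$, using base-change compatibility of $p_*\calO_Q(3)$ and the fact that this is a genuine Cartier class even though odd twists on $\bP(1,1,2)$ are not. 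Your flag about the vertex is well taken but benign here: since the branch curve of a canonical degree one del Pezzo avoids the cone point (it is the non-identity torsion of the anti-canonical pencil), one can run the cyclic-cover bookkeeping on the smooth locus of $Q$ and extend by reflexivity, or use the Weierstrass model as you suggest. In short, your argument buys a complete proof of the proposition as stated, at the cost of the multiplication-map and branch-locus bookkeeping that the paper elects to leave implicit.
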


\begin{proof} By Kodaira vanishing, $H^1(X_t, \calO_{X_t}(-2K_{X_t})) = 0$ for each $t \in T$, and so by cohomology and base change, the vector bundle $\calE = \pi_*\calO_X(-2K_{X/T})$ is locally free of rank four and compatible with base change. Consider the map 
$
\pi^*\calE \to \calO_X(-2K_{X/T}).
$
Since $-2K_{X_t}$ is globally generated on the fiber $X_t$, this map is fiberwise surjective which implies global generation over $T$. This induces a $T$-morphism $X \to \mathbb{P} = \mathbb{P}(\calE)$ which fiberwise is a double cover of a quadric cone as above. 
\end{proof} 

As a corollary, we can identify the moduli space of degree one del Pezzo surfaces with an open substack of the moduli space of pairs $(Q, C)$ where $Q$ is a quadric cone in $\mathbb{P}^3$ and $C \subset Q$ is a complete intersection of $Q$ with a cubic. 

\begin{notation}\label{not:moduli}Let  $\calD^1$ denote the moduli space of anti-canonically polarized del Pezzo surfaces of degree one with ADE singularities, and let $\calQ_3$ denote the moduli space of pairs $(Q,C)$ of a (not necessarily integral) quadric cone $Q$, i.e. $Q$ can be the cone over two lines or a double line, and $C \subset Q$ a complete intersection curve with a cubic. Finally, we let $\calQ_3^{ADE}$ denote the open substack of pairs where $Q$ is normal, $C$ avoids the cone point, and $C$ has at worst ADE singularities (see Table \ref{table:ADE}). 
 \end{notation}

Thus by Proposition \ref{prop:cover}, there is a natural map $\calD^1 \to \calQ_3$. 

\begin{cor} Let $\calD^{1,c} \to \calQ_3$ be the relative coarse moduli space of natural map $\calD^1 \to \calQ_3$ above. There is an isomorphism of $\calD^{1,c}$ with  $\calQ_3^{ADE}$. \end{cor}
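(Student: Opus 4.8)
The plan is to construct a quasi-inverse to the double cover construction of Proposition \ref{prop:cover} and then to account for the deck involution. Concretely, I would produce mutually inverse assignments between families of degree one del Pezzo surfaces with ADE singularities and families of pairs in $\calQ_3^{ADE}$, and show that the only discrepancy is the relative automorphism coming from the double cover, which is exactly what is killed upon passing to the relative coarse space.

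First I would make the map $\calD^1 \to \calQ_3$ explicit and check that it factors through $\calQ_3^{ADE}$. In the Weierstrass-type model $X = \{t^2 = F_6(x,y,w)\} \subset \bP(1,1,2,3)$ (with $\deg x = \deg y = 1$, $\deg w = 2$, $\deg t = 3$) the morphism $\varphi_{|-2K_X|}$ is $(x:y:w:t) \mapsto (x^2 : xy : y^2 : w)$ onto the quadric cone $Q = \{x_0x_2 = x_1^2\}$, and the branch curve $C = \{F_6 = 0\}$ is cut on $Q$ by a cubic. Over the cone point $p = [0:0:0:1]$ the fiber is $\{x=y=0\}\cap X = \{t^2 = F_6(0,0,w)\}$, governed by the $w^3$-coefficient of $F_6$: it is nonzero exactly when $C$ avoids $p$, in which case $X$ is smooth over $p$ with $f$ locally the $\mu_2$-quotient map $\C^2 \to \C^2/\mu_2$ presenting the $A_1$ singularity of $Q$ at $p$; otherwise $X$ meets the $\mu_2$-point of $\bP(1,1,2,3)$ and fails to have ADE singularities there. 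Away from $p$, where $Q$ is smooth, the singularities of $X$ are exactly the double-cover singularities of the branch curve $C$ recorded in Table \ref{table:ADE}. Thus $X$ has only ADE singularities iff $Q$ is normal, $C$ avoids $p$, and $C$ has at worst ADE singularities, i.e. the image lies in $\calQ_3^{ADE}$.

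Conversely I would build the double cover in families over $\calQ_3^{ADE}$. On a normal quadric cone $\mathrm{Cl}(Q) = \Z\cdot \ell$ with hyperplane class $H = 2\ell$, so the cubic section satisfies $C \in |3H| = |6\ell| = |2\cdot 3\ell|$; taking $L = \calO_Q(3\ell)$ gives a canonical isomorphism $L^{\otimes 2} \cong \calO_Q(C)$, and the section defining $C$ endows $\calO_Q \oplus L^{-1}$ with an algebra structure whose relative $\Spec$ is the double cover $f : X \to Q$. The double cover formula gives $K_X = f^*(K_Q + L) = f^*(-\ell)$, whence $-K_X$ is ample and $K_X^2 = 2(-\ell)^2 = 2\ell^2 = 1$ since $\ell^2 = \tfrac12$; combined with the local analysis above (which shows $X$ has only ADE singularities once $C$ avoids $p$ and has ADE singularities), this exhibits $X$ as a family in $\calD^1$. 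Since $\varphi_{|-2K_X|}$ recovers $(Q,C)$, this construction is a section of $\calD^1 \to \calQ_3$ over $\calQ_3^{ADE}$, and in particular the map is surjective onto $\calQ_3^{ADE}$.

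It remains to identify the relative coarse space. Over a fixed $(Q,C) \in \calQ_3^{ADE}$ the sheaf $L = \calO_Q(3\ell)$ is forced (as $\mathrm{Cl}(Q) = \Z$ is torsion-free), so the double cover is unique up to isomorphism, and any automorphism of $X$ inducing the identity on $(Q,C)$ is exactly a deck transformation. Hence the relative inertia of $\calD^1 \to \calQ_3^{ADE}$ is the constant group $\Z/2\Z$ generated by the involution $t \mapsto -t$, and $\calD^1 \to \calQ_3^{ADE}$ is a $\Z/2\Z$-gerbe; its relative coarse space is the rigidification, namely $\calQ_3^{ADE}$ itself, giving $\calD^{1,c} \cong \calQ_3^{ADE}$. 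I expect the main obstacle to be the singularity analysis underlying the first two steps — establishing in families the precise ADE-to-ADE dictionary of Table \ref{table:ADE} and controlling the double cover over the non-Cartier locus at the cone point, where $L = 3\ell$ fails to be Cartier and $f$ is branched even though $C$ avoids $p$ — while the gerbe and rigidification bookkeeping of the last step is formal.
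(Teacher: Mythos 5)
Your overall route is the same as the paper's: show the forward map lands in $\calQ_3^{ADE}$, invert it by the cyclic-cover construction, and then account for the deck involution when passing to the relative coarse space. Your explicit analysis in $\bP(1,1,2,3)$ and the computation $K_X = f^*(-\ell)$, $K_X^2 = 2\ell^2 = 1$ are correct and more concrete than the paper's argument (which instead uses the elliptic involution and torsion points to see that $C$ avoids the cone point and that ADE singularities match), and your ending via gerbes and rigidification is a legitimate substitute for the paper's ending (representable bijection plus Zariski's main theorem for stacks).

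There is, however, one genuine error: the claim that the cyclic-cover construction defines a \emph{section} of $\calD^1 \to \calQ_3$ over $\calQ_3^{ADE}$. The assignment $(Q,C) \mapsto X$ requires choosing both the sheaf $\calL$ with $\calL^{[2]} \cong \calO_Q(C)$ and the isomorphism itself; the sheaf is determined only up to non-canonical isomorphism and the isomorphism only up to scalar, so the construction is not functorial and does not descend to a morphism of stacks. Indeed, a global section would trivialize the gerbe, giving $\calD^1 \cong \calQ_3^{ADE} \times B(\Z/2)$, and this is false: there exist degree one del Pezzo surfaces with an automorphism of order $4$ whose square is the deck (Bertini) involution, for example $t^2 = w^3 + (x^4+y^4)w + x^5y$ with $(x,y,w,t) \mapsto (x,-y,-w,it)$, which descends to an involution of $(Q,C)$. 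For such a surface both lifts of that involution have order $4$, so the extension $1 \to \Z/2 \to \mathrm{Aut}(X) \to \mathrm{Aut}(Q,C) \to 1$ does not split and the gerbe $\calD^1 \to \calQ_3^{ADE}$ is non-neutral. This is precisely why the paper states the corollary only for the relative coarse space and adds the subsequent Remark about the $\mu_2$-gerbe. The error is repairable and does not sink your proof: what your construction genuinely provides is a section \emph{\'etale-locally} on the base (choose $\calL$ and the trivializing isomorphism locally; fiberwise uniqueness, since $\mathrm{Cl}(Q) = \Z$ is torsion-free, makes any two local choices locally isomorphic). Local sections, local uniqueness of the cover, and relative inertia $\Z/2$ are exactly what your final gerbe-plus-rigidification step needs, so the conclusion $\calD^{1,c} \cong \calQ_3^{ADE}$ stands once you weaken ``section'' to ``\'etale-local section'' and drop the implicit claim that the gerbe is trivial.
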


\begin{proof} By Proposition \ref{prop:cover}, the set theoretic map is actually a morphism on the level of moduli spaces. For any surface $X$, the double cover structure is given by the elliptic involution on the anti-canonical curves of $X$. In particular, the branch locus of $X \to Q$ is given by the torsion points of the anti-canonical curves. There is a fixed torsion point for each anti-canonical curve at the basepoint of $|-K_X|$, the identity of the group structure, and the other torsion points are disjoint. Thus $C$ is disjoint from the cone point lying under the basepoint of $|-K_X|$. Moreover, $X$ has ADE singularities if and only if the branch curve $C$ does. Then to conclude, one just observes that given a family $(\calQ, \calC) \to T$, the double cover of $\calQ$ branched along $\calC$ is a family of degree one del Pezzo surfaces so that the map above has an inverse. Thus the map $\calD^{1,c} \to \calQ_3^{ADE}$ is a representable bijection and so it is an isomorphism by Zariski's main theorem for stacks, see e.g. \cite[Theorem A.5]{ai}. \end{proof}

\begin{remark} In the corollary above, one wants to say that $\calD^1$ is identified with $Q_3^{ADE}$. The issue is that the del Pezzo surface has an extra involution given by swapping the fibers of the double cover map and so $\calD^{1,c} \to \calQ_3$ can be thought of as a $\mu_2$-gerbe over its image. 
\end{remark}

\begin{table}[hpt!]
\centering
\caption{ADE curve singularities}\label{table:ADE}
    $\begin{tabu}{|l|c|}
    \hline
    \text{Singularity type} & \text{Local equation}\\
    \hline
    \mathrm{A}_n (n \geq 1) & x^2 + y^{n+1} = 0   \\
    \mathrm{D}_n (n \geq 4) & y(x^2 + y^{n -2}) = 0   \\ 
    \mathrm{E}_6 & x^3 + y^4 = 0  \\
    \mathrm{E}_7 & x(x^2 + y^3) = 0 \\
    \mathrm{E}_8 & x^3 + y^5 = 0  \\
    \hline
    \end{tabu}$ 
\end{table}

Let us explicate the correspondence between the geometry of $X$ and the geometry of the pair $(Q,C)$. The anti-canonical curves of $X$ correspond to the lines on $Q$. The singular anti-canonical curves, or equivalently the singular fibers of the corresponding rational elliptic surface, correspond to lines $l$ that meet $C$ in multiplicity at least two, i.e. the tangent lines to $C$. One can check that there are $12$ such lines, counted with multiplicity. The curve $C$ is a tri-section of the corresponding rational elliptic surface and in particular it is a trigonal  curve of genus four. The Kodaira fiber type of an anti-canonical curve in the corresponding rational elliptic surface can be read off from the the line $l$ and the singularities of $C$ according to Table \ref{table:fibers}. 

\begin{table}[htp!]
    \centering
    \caption{Kodaira fiber types from the branch locus}\label{table:fibers}
    \begin{tabu}{|c|c|c|}
    \hline
    \text{Singular fiber} &\text{multiplicity of} $l \cap C$ & \text{Singularity of} $C$\\
    \hline
       $\mathrm{I}_0$  & transverse &  none (smooth)  \\
       $\mathrm{I}_1$  &   $2$ & none (smooth) \\
       $\mathrm{I}_n$, $n \geq 2$  &  $2$ & $\mathrm{A}_{n-1}$  \\
       $\mathrm{II}$ &  $3$ & none (smooth) \\
       $\mathrm{III}$ & $3$ & $\mathrm{A}_1$  \\
       $\mathrm{IV}$ &   $3$ &$\mathrm{A}_2$  \\
       $\mathrm{I}_n^*$ &  $2$ & $\mathrm{D}_{n + 4}$  \\
       $\mathrm{IV}^*$ & $2$ &$\mathrm{E}_6$  \\
       $\mathrm{III}^*$ &  $2$ &$\mathrm{E}_7$  \\
       $\mathrm{II}^*$ &  $2$ & $\mathrm{E}_8$  \\
       \hline

    \hline
    \end{tabu}
\end{table}

The stack $\calD^1 \cong \calQ^{ADE}_3$ is \emph{not} a separated Deligne-Mumford stack. A separated open Deligne-Mumford substack $\calD^{1,s} \subset \calD^1$ is cut out by GIT stability computed by Miranda \cite{miranda}: $X$ is GIT stable if and only if it has at worst $\mathrm{A}_n$ singularities. As a result we obtain the following. 

\begin{cor}\label{cor:substack} The open substack $\calQ^s_3 \subset \calQ^{ADE}_3$ of pairs $(Q,C)$ such that $C$ avoids the cone point and has at worst $\mathrm{A}_n$ singularities is a smooth and separated Deligne-Mumford stack. 
\end{cor}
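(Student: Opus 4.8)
The plan is to deduce every assertion from the corresponding fact about $\calD^{1,s}$, transported along the gerbe map of the preceding corollary. By that corollary and the remark following it, the morphism $\calD^1 \to \calQ^{ADE}_3$ coming from Proposition \ref{prop:cover} is a $\mu_2$-gerbe, the band being generated by the involution that swaps the sheets of the double cover; passing to the $\mu_2$-rigidification recovers the identification $\calD^{1,c} \cong \calQ^{ADE}_3$. First I would identify the substack of $\calQ^{ADE}_3$ lying beneath $\calD^{1,s}$. By the singularity dictionary recorded above---both in the proof that $\calD^{1,c} \cong \calQ^{ADE}_3$ and in Table \ref{table:fibers}---a surface $X$ has at worst $\mathrm{A}_n$ singularities precisely when its branch curve $C$ does. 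Hence Miranda's stable locus $\calD^{1,s}$ is exactly the preimage of $\calQ^s_3$, and the restricted map $\calD^{1,s} \to \calQ^s_3$ is again a $\mu_2$-gerbe.

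Granting this, openness and the separated Deligne--Mumford properties follow formally. Since $\calD^{1,s}$ is open in $\calD^1$ and the gerbe is faithfully flat, $\calQ^s_3$ is open in $\calQ^{ADE}_3$. For the remaining two properties I would use that $\calQ^s_3$ is the $\mu_2$-rigidification of $\calD^{1,s}$: in characteristic zero $\mu_2$ is étale, so rigidification replaces each (finite, reduced) automorphism group $G$ of a surface by $G/\mu_2$, leaving the inertia finite and reduced; thus $\calQ^s_3$ is again Deligne--Mumford, and its inertia remains proper, so it is separated. Equivalently, both properties descend along the fppf gerbe map from $\calD^{1,s}$.

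It remains to prove smoothness, which is the one genuinely new point. The cleanest route is to observe that over $\calQ^s_3$ the quadric $Q$ is always an honest rank-three cone, so a pair $(Q,C)$ is encoded by a quadratic form $q$ of rank three together with a cubic $f$ taken modulo the linear multiples of $q$; this exhibits $\calQ_3$ in a neighbourhood of $\calQ^s_3$ as an open substack of a global quotient $[U/G]$ with $U$ an open subset of a vector space (hence smooth) and $G$ reductive, whence smoothness of the quotient stack is automatic and passes to the open substack $\calQ^s_3$. Alternatively one may descend smoothness along the smooth surjection $\calD^{1,s} \to \calQ^s_3$ from smoothness of $\calD^{1,s}$, i.e. from the unobstructedness $H^2(X, T_X) = 0$ of a degree one del Pezzo surface with canonical singularities.

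I expect the main obstacle to be this smoothness input. On the pair side it is hidden in setting up the quotient presentation correctly---in particular the quotient of cubics by the subspace $\ell \cdot q$, which is exactly the bookkeeping underlying Miranda's GIT---while on the surface side it requires the vanishing $H^2(X, T_X) = 0$ (more precisely of the relevant $\mathrm{Ext}^2$ of the cotangent complex) for a surface carrying $\mathrm{A}_n$ rather than smooth points. The most reliable way to secure the latter is to pass to the minimal resolution $\widetilde X$, a smooth weak del Pezzo surface of degree one, verify $H^2(\widetilde X, T_{\widetilde X}) = 0$ there, and compare the deformation theories of $X$ and $\widetilde X$ across the crepant resolution.
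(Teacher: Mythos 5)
Your proposal matches the paper's own (implicit) argument: the corollary is presented there as an immediate consequence of the identification $\calD^{1,c} \cong \calQ_3^{ADE}$ together with Miranda's GIT computation that the $\mathrm{A}_n$ locus $\calD^{1,s}$ is a separated open Deligne--Mumford substack, which is precisely your transport-along-the-gerbe strategy, and your quotient presentation for smoothness is exactly the bookkeeping underlying Miranda's GIT (in Weierstrass form the parameter space literally is an open subset of a vector space acted on by the reductive group $\G_m \times \mathrm{GL}_2$). One small imprecision worth fixing: as you state it, the space of pairs $(q,f)$ with $q$ of rank exactly three is not an open subset of a vector space (rank exactly three is only locally closed, and the subspace $\ell \cdot q$ of cubics varies with $q$), but rather a bundle over the smooth rank-three determinantal locus --- still smooth, which is all that smoothness of $[U/G]$ requires, reductivity of $G$ being irrelevant for that point.
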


\section{Stable pair compactifications}\label{sec:stablepairs}

In this section we set up the compactifications of the moduli spaces of interest using the theory of stable pairs (see e.g. \cite{kollarbook}).  A natural choice of divisor on a degree one del Pezzo surface is the sum of the rational curves in the anti-canonical pencil, which as we saw above are exactly the nodal and cuspidal genus one curves lying over lines of $Q$. We can take these curves $F_i$ counted with multiplicity $m_i$ to endow $X$ with a boundary divisor 
$$
F = a\sum m_i F_i.
$$
Here we will take $a = \frac{1}{12} + \epsilon$ for $1 \gg \epsilon > 0$. This is the smallest $a$ for which the pair $(X,F)$ is stable as there are $12$ anti-canonical curves counted with multiplicity so that $F \sim_\Q -12aK_X$.

In \cite{rational}, we described the closure of the moduli space $\calD^{1,s}$ inside the moduli space of stable pairs using degenerations of rational elliptic fibrations; we denoted this space by $\calR := \calR(\frac{1}{12} + \epsilon)$. The choice of notation is suggestive -- $\calR$ was chosen since degree one del Pezzo surfaces are the blowdown of the section of a rational elliptic surface and this compactification is furnished by degenerating the elliptic fibration structure.

\begin{theorem}\cite[Theorem~1.1]{rational} There exists a proper Deligne-Mumford stack $\calR = \calR(\frac{1}{12} + \epsilon)$
parametrizing anti-canonically polarized broken del Pezzo surfaces of degree one with the following
properties:
\begin{itemize}
\item The interior $\calD^{1,s} \subset \calR$ parametrizes degree one del Pezzo surfaces with at worst rational double
point singularities.
\item The complement $\calR \setminus \calD^{1,s}$ consists of a unique boundary divisor $\partial \calR$ parametrizing 2-Gorenstein
semi-log canonical surfaces with ample anti-canonical divisor and exactly two irreducible
components.
\item The locus $\calR^{\circ} \subset \calR$ parametrizing surfaces such that every irreducible component is normal
is a smooth Deligne-Mumford stack.
\end{itemize}
\end{theorem}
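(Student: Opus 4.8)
The plan is to realize $\calR$ as a closed substack of a KSBA-type moduli stack of stable pairs and then to read off the stated geometry by an explicit stable-reduction analysis. To each degree one del Pezzo $X$ with rational double points I would attach the weighted boundary $F = (\tfrac{1}{12}+\epsilon)\sum m_i F_i$ of rational anti-canonical curves. As in the text, $F \sim_\Q -12aK_X$ with $a=\tfrac{1}{12}+\epsilon$, so $K_X+F \sim_\Q (1-12a)K_X \sim_\Q -12\epsilon K_X$ is ample; thus $(X,F)$ is a stable pair of fixed volume $(K_X+F)^2 = 144\epsilon^2$ and coefficients in the finite set $\{m(\tfrac1{12}+\epsilon)\}$. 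The general machinery for moduli of stable pairs (boundedness, local closedness, the valuative criterion realized by stable reduction, and deformation theory; see e.g. \cite{kollarbook}) produces a proper Deligne--Mumford stack of such pairs. Since this ambient stack may have many components, I would define $\calR$ to be the closure of the image of the degree one del Pezzo locus under the resulting classifying map, so that properness and Deligne--Mumford-ness of $\calR$ are inherited from the ambient stack.

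First I would identify the interior. The locus of pairs with $X$ normal is open, and over it stability forces $X$ to be klt with $-K_X$ ample: for $F$ a weighted sum of members of the anti-canonical pencil, ampleness of $K_X+F \sim_\Q (1-12a)K_X$ is equivalent to ampleness of $-K_X$, and the fixed numerics pin down $K_X^2=1$. Log canonicity of $(X,F)$, together with the fact that the $F_i$ are exactly the nodal and cuspidal fibers of the elliptic pencil, then restricts $X$ to have at worst rational double points, and conversely any such del Pezzo with its canonical boundary is a stable pair. A key point here is that the boundary weighting rigidifies the extra automorphisms responsible for the non-separatedness of $\calD^1 \cong \calQ_3^{ADE}$ (cf.\ the GIT analysis of Miranda \cite{miranda}), so that the normal locus of $\calR$ recovers the separated interior $\calD^{1,s}$ as an open substack.

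The hard part will be the explicit classification of the boundary $\calR \setminus \calD^{1,s}$. Here I would exploit that a degree one del Pezzo is the blow-down of the section of a rational elliptic surface, so degenerating $X$ amounts to degenerating the elliptic fibration. Given a one-parameter family of del Pezzos approaching the boundary, I would run stable reduction for the pair $(X,F)$: after a finite base change and a sequence of birational modifications governed by the MMP for pairs, one extracts the unique semi-log canonical limit, and the content is to carry this reduction out in families and to enumerate the central fibers using the degeneration of Kodaira fiber types (cf.\ Table \ref{table:fibers}) and the combinatorics of the twelve weighted fibers. One must then show that (i) every limit is $2$-Gorenstein and slc with ample anti-canonical divisor, (ii) the generic boundary surface breaks into exactly two irreducible components glued along a curve, and hence (iii) the boundary is a single irreducible divisor $\partial\calR$. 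I expect this to be the main obstacle, since it requires controlling precisely which non-normal slc surfaces arise as limits and ruling out higher-codimension strata that could contribute extra divisorial boundary components.

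Finally, to prove that the locus $\calR^{\circ}$ of surfaces with normal components is smooth, I would appeal to the deformation theory of stable pairs. Over $\calR^{\circ}$ every component is a surface of (log) del Pezzo type with mild, rational singularities, and I would show that the relevant obstruction space---the $T^2$ of the pair, equivalently an $H^2$ of a logarithmic tangent sheaf---vanishes, using rationality of the singularities and the del Pezzo positivity. Unobstructedness then gives that the versal deformation space is smooth, so $\calR^{\circ}$ is a smooth Deligne--Mumford stack; the interior case reduces to the vanishing of $H^2(X,T_X)$ for del Pezzos, with the additional input being a local analysis of the crossing locus along which the two components of a boundary surface are glued.
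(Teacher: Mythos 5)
This statement is quoted verbatim from \cite[Theorem 1.1]{rational}; the present paper contains no proof of it, only the remark that $\calR$ was constructed there as the closure of $\calD^{1,s}$ in a moduli space of stable pairs and analyzed via degenerations of rational elliptic fibrations. So your proposal can only be compared against that reported strategy, and in outline it does match it: the boundary divisor $F = (\tfrac{1}{12}+\epsilon)\sum m_i F_i$, the volume count $(K_X+F)^2 = 144\epsilon^2$, defining $\calR$ as a closure inside an ambient KSBA stack (inheriting properness and the Deligne--Mumford property), and studying limits by degenerating the elliptic fibration obtained from $|-K_X|$.

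As a proof, however, the proposal has genuine gaps, and they sit exactly at the content of the theorem. The second bullet --- that the boundary is a \emph{single irreducible divisor} whose points are \emph{2-Gorenstein} slc surfaces with \emph{exactly two} irreducible components --- is precisely the step you defer (``I expect this to be the main obstacle''), and you even weaken it to ``the generic boundary surface breaks into exactly two irreducible components,'' whereas the theorem asserts this for every boundary surface. Nothing in your sketch rules out limits with three or more components, shows the non-normal locus is an irreducible divisor rather than a union of strata, or explains why the Gorenstein index is exactly $2$. In the actual development this all comes out of the classification of twisted and pseudoelliptic degenerations (the types $B_{\mathrm{I}}$, $B_{\mathrm{II}}$, $B_{\mathrm{III}}$ of Theorem \ref{thm:recallboundary}), where the two components are glued along a twisted fiber $G$ with $\nu_i^*K_X = -\tfrac{1}{2}f$ --- which is exactly where 2-Gorenstein comes from --- and this rests on machinery from \cite{calculations, tsm, master} that an MMP-in-families stable reduction sketch does not reconstruct. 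Similarly, the smoothness of $\calR^{\circ}$ is asserted via a hoped-for vanishing of an obstruction space ($T^2$, or an $H^2$ of a log tangent sheaf); no such vanishing is proved, and for reducible slc pairs it is not a formal consequence of the components having rational singularities, so this too remains a conjecture inside the proposal. In short, what you have written is a plausible programme that mirrors the cited construction, but the three assertions of the theorem are restated as goals rather than established.
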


On the other hand, we can consider marking the quadric cone $Q$ with lines corresponding to the singular members $F_i$. These are the lines that intersect $C$ with multiplicity $m \geq 2$. There are $12$ such lines counted with multiplcity $m - 1$ and so we consider
$$
L = a\sum (m_i - 1)l_i
$$
where again $a = \frac{1}{12}+ \epsilon$. Using a similar calculation, one can check that this is the smallest $a$ for which $(Q, L)$ is stable. Using this choice of boundary divisor, we can map $\calQ^s_3$ into a certain moduli space of stable pairs $\calM_v$, by taking $(Q,C)$ to the stable pair $(Q,L)$. Note here that the underlying surfaces are all isomorphic but the curve $C$ determines the boundary $L$. Let us denote this by $\varphi : \calQ^s_3 \to \calM_v$. 

\begin{definition}\label{def:compactification} Let $\overline{\calQ}$ be the closure of the graph of $\varphi: \calQ^s_3 \to \calM_v$ inside the product $\calQ_3 \times \calM_v$. 
\end{definition}

It is clear that $\calQ_3^s$ is a dense open substack of $\overline{\calQ}$. Moreover, $\overline{\calQ}$ parametrizes triples $(Q, C, L)$ where $Q$ is a singular quadric cone in $\mathbb{P}^3$, the curve $C$ is a complete intersection with a cubic, and $L = a\sum (m_i - 1)l_i$ where $\{l_i\}$ is a collection of lines on $Q$ meeting at a fixed point $p \in Q$ and intersecting $C$ at multiplicity $m_i$. Moreover, these triples are such that $(Q,C,L)$ is the central fiber $(Q_0, C_0, L_0)$ of a family $(Q_t, C_t, L_t)$ for $t \in T$ a smooth curve where $(Q_t, C_t)$ for $t \neq 0$ is a family of GIT stable pairs $\calQ^s_3$, and $(Q_t, L_t)$ is a family of stable pairs in $\calM_v$. Note that a priori, $\overline{\calQ}$ need not be proper nor Deligne-Mumford. 

\section{Broken degree one del Pezzo surfaces} 

In this section we will analyze the double cover structure on the broken degree one del Pezzo surfaces appearing on the boundary of $\calR$. Let us recall the description of these surfaces from \cite[Section 7.1]{rational}. For more details on some of the terminology (e.g. twisted fibers) we refer the readers to \cite{calculations, tsm, master}. 

\begin{theorem}\cite[Section 7]{rational}\label{thm:recallboundary} The boundary $\partial \calR = \calR \setminus \calD^{1,s}$ parametrizes surfaces of the following types:
\begin{enumerate}

\item[($B_{\mathrm{I}}$)] the slc union $X = Y_1 \cup_G Y_2$ where $Y_1$ and $Y_2$ are rational pseudoelliptic surfaces glued along a twisted $\mathrm{I}_0^*$ pseudofiber $G$ such that $-2K_X$ is Cartier and ample, and all pseudofibers of $X$ away from the double locus $G$ are of Kodaira type $\mathrm{I}_n, \mathrm{II}, \mathrm{III}$ or $\mathrm{IV}$;

\item[($B_{\mathrm{II}}$)] the slc union $X = Y_1 \cup_G Y_2$ where $Y_1$ is a rational pseudoelliptic surface with a twisted $\mathrm{I}_n^*$ pseudofiber at $G$, and all other fibers as above, $Y_2$ is an isotrivial $j$-invariant $\infty$ pseudoelliptic surface of type $2\mathrm{N}_1$ with $G$ a twisted $\mathrm{N}_1$ fiber, and $-2K_X$ Cartier and ample;

\item[($B_{\mathrm{III}}$)] the slc union $X = Y_1 \cup_G Y_2$ where $Y_1$ and $Y_2$ are both isotrivial $j$-invariant $\infty$ pseudoelliptic surfaces of type $2\mathrm{N}_1$ glued along twisted $\mathrm{N}_1$ fibers, and $-2K_X$ is Cartier and ample. 

\end{enumerate}

\end{theorem}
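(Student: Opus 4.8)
The plan is to transport the classification to the moduli of rational elliptic surfaces with section, where the theory of stable-pairs limits is well developed, and then contract the sections. Blowing up the unique base point of $|-K_X|$ turns a degree one del Pezzo $X$ into a rational elliptic surface $R \to \bP^1$ with a section $S$, and conversely contracting $S$ recovers $X$; under this dictionary a stable-pairs compactification of $\calD^{1,s}$ corresponds to a compactification of the moduli of pairs $(R,S)$ decorated with the fiber divisor, after which one contracts the limiting section to obtain the \emph{pseudoelliptic} surfaces in the statement. So the first step is to establish this correspondence in families and to record that the polarizing divisor $F = a\sum m_i F_i$ pulls back to a weighted sum of singular fibers on $R$, with $a = \tfrac{1}{12}+\epsilon$ chosen so that $F \sim_\Q -12aK_X$ sits exactly at the stability threshold.

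Next I would apply the structure theorem for KSBA limits of elliptic surfaces from \cite{calculations, tsm, master}. After semistable and base-change reduction, a one-parameter family of rational elliptic surfaces degenerates to a \emph{broken} elliptic surface: a union of elliptic components glued along fibers, where the weighted stability condition forces the gluing loci to be \emph{twisted fibers} (the specific local models of loc.\ cit.) and forces each component to be either a genuine rational elliptic surface or an isotrivial $j=\infty$ component. The decisive numerical constraint is that the total fiber degree of a rational elliptic surface is $12$, so the base $\bP^1$ can only break into a short chain; combined with the requirement that the contracted surface $X$ be slc with $-2K_X$ Cartier and ample, this sharply limits the admissible combinatorics to a union $X = Y_1 \cup_G Y_2$ of two components.

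The heart of the argument is then a case analysis of the gluings across the double locus $G$. One shows that the only twisted fibers compatible with the $2$-Gorenstein and ampleness conditions on $X$ are of type $\mathrm{I}_0^*$, $\mathrm{I}_n^*$, and $\mathrm{N}_1$, and that matching fiber types across $G$ so that the glued surface is slc with $-2K_X$ Cartier pairs an $\mathrm{I}_0^*$ with an $\mathrm{I}_0^*$, an $\mathrm{I}_n^*$ with an $\mathrm{N}_1$, or an $\mathrm{N}_1$ with an $\mathrm{N}_1$. These three matchings are precisely the cases $(B_{\mathrm{I}})$, $(B_{\mathrm{II}})$, $(B_{\mathrm{III}})$. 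Meanwhile the pseudofibers away from $G$ are confined to the Kodaira types $\mathrm{I}_n, \mathrm{II}, \mathrm{III}, \mathrm{IV}$, since these are exactly the types whose contributions keep the component in the $\mathrm{A}_n$-stable locus of Corollary~\ref{cor:substack} and keep $-2K$ Cartier, the $\mathrm{D}$- and $\mathrm{E}$-type fibers having been pushed onto the double locus.

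I expect the main obstacle to be the local verification at the double locus: confirming that each candidate gluing along a twisted fiber genuinely yields a semi-log canonical, $2$-Gorenstein surface with ample $-2K_X$, and conversely that no other twisted-fiber type survives the stability and Cartier conditions. This demands an explicit understanding of the local analytic models of the twisted fibers and of how the weighted stable-pairs condition selects among them, which is the delicate technical input inherited from \cite{calculations, tsm, master}.
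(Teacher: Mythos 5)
This theorem is not proved in the paper at all --- it is recalled verbatim from \cite[Section 7]{rational} --- and your strategy (pass to rational elliptic surfaces with section via the blowup of the base point of $|-K_X|$, apply the broken elliptic surface/twisted fiber structure theory of stable-pairs limits from \cite{calculations, tsm, master}, contract the limiting sections to pseudoelliptic surfaces, and then classify the admissible gluings $\mathrm{I}_0^*$--$\mathrm{I}_0^*$, $\mathrm{I}_n^*$--$\mathrm{N}_1$, $\mathrm{N}_1$--$\mathrm{N}_1$ under the slc, $2$-Gorenstein, and ampleness constraints) is exactly the route taken in that source, as the paper itself indicates when it says the compactification $\calR$ is ``furnished by degenerating the elliptic fibration structure.'' Your outline is correct at the level of structure, with the delicate local analysis at the twisted fibers deferred to the same references that the original proof relies on.
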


We will call these types of surfaces broken degree one del Pezzo surfaces of type $B_{\mathrm{I}}, B_{\mathrm{II}}$, and $B_{\mathrm{III}}$ respectively. We note that in \cite{rational}, we called the surfaces in $\calD^{1,s} \subset \calR$ surfaces of type A, and we referred to the type $B_{\mathrm{I}}$ surfaces as surfaces of type B. We did not give names to what appear here as type $B_{\mathrm{II}}$ or type $B_{\mathrm{III}}$.

The main theorem of this section is that, as in the case of degree one del Pezzo surfaces, the linear series $|-2K_X|$ on a broken degree one del Pezzo induces a double cover map $X \to Q$ of a quadric cone $Q \subset \mathbb{P}^3$, except now $Q = \mathbb{P}^2 \cup_l \mathbb{P}^2$ is the cone over two lines. 

\begin{theorem}\label{thm:doublecover} 
Let $Q = \mathbb{P}^2 \cup_l \mathbb{P}^2$ denote the quadric cone over two lines. If $X$ is a broken degree one del Pezzo surface, then there is a double cover $\varphi : X \to Q \subset \mathbb{P}^3$ branched along the double locus $l$ as well as a complete intersection curve $C \subset Q$ with a cubic. Moreover, if $H$ is a hyperplane section of $Q$, then $\varphi^*H \in |-2K_X|$ is a $2$-anti-canonical curve. 

\end{theorem}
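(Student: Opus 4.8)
The plan is to treat the general structure first and then descend to the explicit models of Theorem \ref{thm:recallboundary} case by case. The first step is a Riemann--Roch computation. Since $X$ is a flat degeneration of degree one del Pezzo surfaces we have $\chi(\mathcal{O}_X) = 1$ and $K_X^2 = 1$, and since $X$ is a demi-normal (hence $S_2$, hence Cohen--Macaulay) surface with $-2K_X$ an ample Cartier divisor, Serre duality gives $H^2(X, \mathcal{O}_X(-2K_X)) \cong H^0(X, \mathcal{O}_X(3K_X))^\vee = 0$, while a Kodaira-type vanishing theorem for slc pairs gives $H^1(X, -2K_X) = 0$. Thus $h^0(X, -2K_X) = \chi(\mathcal{O}_X) + \tfrac{1}{2}\big((-2K_X)^2 - (-2K_X)\cdot K_X\big) = 1 + 3 = 4$, and after verifying base-point-freeness this produces a morphism $\varphi\colon X \to \mathbb{P}^3$ with $\varphi^* H \in |-2K_X|$ and $(\varphi^* H)^2 = 4$.

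The second step is to pin down the degree and the image using the pseudoelliptic structure. Each component $Y_i$ carries, after un-contracting its section, a fiberwise $[-1]$ involution; restricted to a general (pseudo)fiber $E$, a curve of arithmetic genus one, this acts trivially on the two-dimensional space of sections of $(-2K_X)|_E$, so the involution acts trivially on $H^0(X, -2K_X)$ and $\varphi$ is invariant under it. Consequently $\varphi$ factors through the quotient and has degree $2$ onto its image, forcing the image to be a surface of degree $(\varphi^*H)^2 / 2 = 2$, that is, a quadric in $\mathbb{P}^3$. To see that this quadric is the rank-two cone $Q = \mathbb{P}^2 \cup_l \mathbb{P}^2$ rather than the smooth cone $\mathbb{P}(1,1,2)$, I would restrict $\varphi$ to each component: adjunction gives $(-2K_X)|_{Y_i} = -2K_{Y_i} - 2G|_{Y_i}$, and the quotient of $Y_i$ by its elliptic involution is a rational ruled surface whose contraction is a single plane $\mathbb{P}^2$, with the double locus $G$ mapping isomorphically to the ruling line $l$ along which the two planes are glued.

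The third and most substantial step is the case analysis over the three boundary types, carried out so as to match Table \ref{table:fibers}. For type $B_{\mathrm{I}}$, where both $Y_i$ are rational pseudoelliptic surfaces glued along a twisted $\mathrm{I}_0^*$ fiber $G$, the two elliptic involutions are compatible along $G$ and glue to a global involution whose quotient is $Q$; the branch locus is $l$ together with a curve $C$ meeting each plane in a cubic, and reading the tangencies $l_i \cap C$ off Table \ref{table:fibers} confirms both that $C$ is a complete intersection with a cubic and that the pseudofibers have the asserted Kodaira types. For types $B_{\mathrm{II}}$ and $B_{\mathrm{III}}$, one or both components are isotrivial $j$-invariant $\infty$ pseudoelliptic surfaces of type $2\mathrm{N}_1$; here I would write down the double cover and its branch cubic on each such component explicitly using the description of the twisted $\mathrm{N}_1$ fiber, then verify that the branch data along the glued fibers matches so that the two covers descend to a single cover of $Q$.

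The step I expect to be the main obstacle is exactly this gluing compatibility along the non-normal locus. I must check that the fiberwise involutions on $Y_1$ and $Y_2$ agree on the double locus $G$, so that they descend to one genuine finite degree-two morphism onto the \emph{non-normal} target $Q$, with branch divisor the correct even Cartier class $l \cup C$, rather than producing two unrelated covers of the two planes. The isotrivial $2\mathrm{N}_1$ components in types $B_{\mathrm{II}}$ and $B_{\mathrm{III}}$ are where this compatibility is least transparent, since the elliptic fibration degenerates along $G$ and the involution must be understood through the explicit local model of the twisted fiber rather than through a smooth elliptic pencil.
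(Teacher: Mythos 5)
Your outline runs in the opposite direction from the paper's proof: you start from the complete linear series $|-2K_X|$ and try to show it gives a degree-two map onto a quadric, whereas the paper constructs each component $Y_i$ explicitly as the double cover of $\mathbb{P}^2$ branched along a line $l$ plus a cubic $C$ (checking that this recovers every component type in Theorem \ref{thm:recallboundary}), glues these covers to a map $X \to \mathbb{P}^2\cup_l\mathbb{P}^2$, and only afterwards, in the proof of Theorem \ref{thm:maintheorem}, uses precisely your cohomological computation $h^0(-2K_X)=4$ to conclude that the constructed linear series is complete. Your direction could in principle work, but as written it has a genuine gap at exactly the point you flag and then defer: the descent of the componentwise elliptic involutions (equivalently, of the two componentwise covers) to a single finite degree-two morphism onto the non-normal $Q$. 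Saying you ``would verify that the branch data along the glued fibers matches'' is not a proof; that verification is the entire content of the theorem at that point, and nothing in your sketch supplies the mechanism. The paper's mechanism is Koll\'ar's gluing theory: the gluing isomorphism $\tau' : G_1 \to G_2$ defining $X$ as a stable pair must preserve $\mathrm{Diff}_{G_i}(F)$ and the basepoints of the elliptic pencils, and the computation that these data are exactly the preimages of $\mathrm{Diff}_{l_i}(C_i)$ and of the points $p_i$ shows that $\tau'$ is the lift of the unique isomorphism $\tau : l_1 \to l_2$ matching the planar branch data. This is simultaneously the condition for the branch locus $l \cup C$ to descend to a $\mathbb{Q}$-Cartier divisor on $Q$ and for the map $Y_1 \sqcup Y_2 \to \mathbb{P}^2 \cup_l \mathbb{P}^2$ to factor through $X$. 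Without this (or an equivalent) argument, your steps produce only two unrelated covers of two separate planes, not a cover of $Q$.

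Two smaller gaps in the same spirit: first, you assert base-point-freeness of $|-2K_X|$ on the slc surface $X$ parenthetically (``after verifying base-point-freeness''), but this is not automatic on a non-normal surface; the paper never needs it inside this theorem because its $\varphi$ is built as a finite cover pulling back an ample system, and global generation of the full $|-2K_X|$ is deduced only later by the dimension count. Second, your degree argument (``invariant under the involution, hence degree $2$ onto a quadric'') needs to rule out that $\varphi$ has degree four onto a plane; one must show the induced map from the quotient is birational onto its image, which again is easiest to see from the explicit model rather than from invariance alone.
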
 

\begin{proof} We will construct the $Y_i$ appearing as components of type $B_{\mathrm{I}}$, $B_{\mathrm{II}}$ or $B_{\mathrm{III}}$ surfaces explicitly as double covers of $\mathbb{P}^2$ branched over a line and a cubic and then glue them together to obtain a map $Y_1 \cup_G Y_2 \to \mathbb{P}^2 \cup_l \mathbb{P}^2$. 

Consider the data of $(\bP^2, C, l, p)$ where $C \subset \bP^2$ is a cubic curve, $l \subset \bP^2$ is a line, and $p \in l$ is a point on the line avoiding $C$. Let $g : Y \to \bP^2$ be the double cover branched along $C \cup l$. We will see that each of the surfaces $Y_i$ as above are constructed in this way depending on how $C$ and $l$ meet. 

Indeed, let us first check that $Y$ is a rational pseudoelliptic surface. Since $Y$ is branched over a quartic curve in $\bP^2$, we can see that $-K_Y$ is ample by Riemann-Hurwitz. Therefore, $Y$ is a rational surface. Moreover, $Y$ has a pencil of elliptic curves through $q = g^{-1}(p)$ given by  pulling back the pencil of lines through $p$. If we blow up the ideal of $p$  as well as its inverse image,  we obtain a double cover $g' : Y' \to \bF_1$ where $Y'$ is elliptically fibered with section. Moreover, $g'$ is branched along the fiber $G'$ lying over the strict transform of $l$ in $\bF_1$. In particular, $G'$ supports a twisted fiber with non-reduced multiplicity two. If $S \subset Y'$ is the section, a local computation shows that in fact $Y'$ meets $G$ in an $\mathrm{A}_1$ singularity. Thus $Y$ is a rational pseudoelliptic surface with a pseudofiber $G$ lying over $l$ that supports a twisted pseudofiber with non-reduced multiplicity. 

The types of surfaces appearing in the above Theorem \ref{thm:recallboundary} depend on the singularities of $C$, and the intersection $C \cap l$. To obtain a pseudoelliptic component $Y$ as in surfaces of Type $B_{\mathrm{I}}$, we take $C$ to intersect $l$ transversely in three smooth points, with $C$ having at worst $\mathrm{A}_n$ singularities away from  $l \cap C$. In this case we obtain a surface with at worst $\mathrm{A}_n$ singularities away from the fiber $G$ which indicates that the pseudofibers are of type $\mathrm{I}_n$, $\mathrm{II}$, $\mathrm{III}$ or $\mathrm{IV}$ away from $G$. Moreover, the pesudofiber $G$ comes from the fiber $G'$ on $Y'$ supporting a multiplicity two pseudofiber along which $Y'$ has four $\mathrm{A}_1$ singularities, exactly an $\mathrm{I}_0^*$ fiber (see \cite[Lemma 4.2 (iii)]{calculations}). 

In surfaces of type $B_{\mathrm{II}}$, we have a normal component $Y$ similar to the one constructed above except the pseudofiber $G$ has Kodaira type $\mathrm{I}_n^*$. When $n = 1$, there is an $\mathrm{A}_3$ singularity on $Y$ along $G$ and this corresponds to $C$ being tangent along $l$ with multiplicity $2$ so that so that $C \cup l$ has an $\mathrm{A}_3$ singularity. If $n \geq 2$, then $Y$ has a $\mathrm{D}_{n + 2}$ singularity along $F$, and this corresponds to $C$ degenerating to an $\mathrm{A}_{n - 1}$ singularity where it meets $l$ so that $C \cup l$ has a $\mathrm{D}_{n + 2}$ singularity. In each of these cases, there is another $\mathrm{A}_1$ singularity on $Y$ along $G$ so that there must be another point at which $l$ and $C$ meet transversely. In particular, $l$ meets $C$ at multiplicity $2$ along the singular point since it meets $C$ multiplicity three in total. 

Finally, we have the non-normal isotrivial $j$-invariant $\infty$ components appearing in type $B_{\mathrm{II}}$ and $B_{\mathrm{III}}$ surfaces. To construct these surfaces, note that a double cover branched along a nonreduced curve of multiplicity two with smooth support is an slc surface with ramification divisor being the double locus. In this case, we take the branch curve $C$ degenerating to the union of a line $r$ and a double line $s$. Then on $Y$, the pencil of lines lifts to a pencil of nodal cubics with the node lying over $s$, and the other two torsion point lying over $r$. At the point where $s$ and $r$ intersect, the line in the pencil lifts to a Weierstrass $\mathrm{N}_1$ cusp, and over $l$, we obtain a multiplicity two twisted fiber with with an $\mathrm{A}_1$ singularity where $l$ meets $r$ and a singularity analytically isomorphic to a pinch point over the point where $s$ meets $l$. This is precisely the twisted $\mathrm{N}_1$ fiber. 

This shows that each component of a surface from Theorem \ref{thm:recallboundary} is obtained as the double cover of $\bP^2$ branched along the union $l \cup C$ of a line and a cubic curve $C$. Moreover, the elliptic pencil $Y$ is pulled back from the pencil of lines through a fixed point $p \in l$ avoiding $C$, and the pseudofiber $G$ along which the components are glued appearing as the ramification divisor lying over $l$.  Now we wish to obtain $Y_1 \cup_G Y_2$ as a double cover of $\bP^2 \cup_l \bP^2$ by gluing. The collection of data $(\bP^2, C, l, p)$ that determines $Y$ also determines four special points counted with multiplicity on the line $l$, namely the points $p$ and $l \cap C$. As a divisor on $l$, the intersection $l \cap C$ is precisely the different $\mathrm{Diff}_l(C)$. 

To glue the double covers given by two such collections of data $(\bP^2, C_1, l_1, p_1)$ and $(\bP^2, C_2, l_2, p_2)$, we must pick an isomorphism $\tau : l_1 \to l_2$ such that $\tau(p_1) = p_2$ (in order to identify the elliptic pencils on $Y_1$ and $Y_2$) and such that $\tau(\mathrm{Diff}_{l_1}(C_1)) = \mathrm{Diff}_{l_2}(C_2)$. Note in particular, such a $\tau$, if it exists, is unique and $\tau$ exists if and only if the $j$-invariant of the four special points on each of the lines agrees. Indeed this is the $j$-invariant of the pseudofiber $G$ on each of the components $Y_i$ and must agree if the $Y_i$ glue along $G$ to form a surface of type $B_{\mathrm{I}}$, $B_{\mathrm{II}}$, or $B_{\mathrm{III}}$. The necessity of the latter condition on the different comes from Koll\'ar's gluing theory (see \cite[Chapter 5]{singmmp}): this is precisely the condition so that the branch locus descends to a $\bQ$-Cartier divisor on $\bP^2 \cup_l \bP^2$ glued by this $\tau$. 

Now we have a map $Y_1 \sqcup Y_2 \to \bP^2 \cup_l \bP^2$ by composing the double cover map on each component $Y_i$. We wish to show this descends to map $Y_1 \sqcup_G Y_2$. If $G_i \subset Y_i$ is the preimage of the double locus $G$ inside $Y_i$, then it is naturally endowed with an isomorphism $\tau' : G_1 \to G_2$ such that $\tau'(\mathrm{Diff}_{G_1}(F)) = \mathrm{Diff}_{G_2}(F)$ where $F$ is the divisor of marked fibers on $Y_1 \sqcup_G Y_2$ making it a stable pair. Now $\mathrm{Diff}_{G_i}(F)$ consists of the point $F|_{G_i}$, the basespoint of the elliptic pencil on $Y_i$, which lies over $p_i$, as well as a contribution from the singularities of $Y_i$ along $G_i$. We described these singularities above in terms of the ramification data $(C, l)$, and from this description it is clear that the contribution of these singularities to the different is exactly given by the preimage of $\mathrm{Diff}_{l_i}(C_i)$. We conclude that $\tau'$ identifies the preimages of $p_i$ as well as of $\mathrm{Diff}_{l_i}(C_i)$. Thus $\tau'$ is the lift to $G_i \cong \bP^1$ of the unique isomorphism $\tau$ used to glue the two components in $\bP^2 \cup_l \bP^2$. Therefore two points on $G_1$ and $G_2$ map to the same point in $\bP^2 \cup_l \bP^2$ if and only if they are identified by $\tau'$ and so $\varphi : Y_1 \sqcup Y_2 \to \bP^2 \cup_l \bP^2$ factors through the broken degree one del Pezzo surface $X = Y_1 \cup_G Y_2 \to \bP^2 \cup_l \bP^2$ as claimed (we again use Koll\'ar's gluing theory; see \cite[Chapter 5.5]{singmmp}). 

Finally, consider $Q := \bP^2 \cup_l \bP^2 \subset \bP^3$ embedded as a singular quadric surface. We have a hyperplane section $H$ of $Q$ consisting of two lines meeting at the cone point $p$ (the image of the $p_i$ above). The pullback $\varphi^*H$ to $X$ consists of two elliptic pseudofibers, one on each component $Y_i$, meeting at the point on $G$ lying above $p$. If we further pullback to a component $Y_i$, we obtain the class of a pseudofiber, i.e. an element of the elliptic pencil on $Y_i$. On the other hand, denoting by $\nu_i : Y_i \to X$ the natural map, we have \[\nu_i^*K_X = K_{Y_i} + G_i = K_{Y_i} + 1/2f = -f + 1/2f = -1/2f,\] where $f$ is a pseudofiber class on $Y_i$ (see e.g. \cite[Lemma 7.7]{rational}). Thus $$\nu_i^*\varphi^*H = f = \nu_i^*(-2K_X)$$ is an equality of Cartier divisor classes which implies that $$\nu^*\varphi^*H = \nu^*(-2K_X)$$ where $\nu : Y_1 \sqcup Y_2 \to Y_1 \cup_G Y_2 = X$ is the gluing map. Since $G$ is an integral projective curve, then the induced map on Picard groups is injective (see e.g. \cite{MO}) so we conclude that $\varphi^*H = -2K_X$. \end{proof}

\begin{table}[htp!]
    \centering
    \caption{In the notation of the theorem, the following table describes the dictionary between the singular fiber of a normal component $Y$ along the double locus and the singularities of the branch data along $D$ (see Remark \ref{rmk:table}).}\label{table:fibers}
    \begin{tabu}{|c|c|c|}
    \hline
 
    \text{Singularities of $Br$} & $Br \cap D$ & \text{Singular fiber $Y$}\\
    \hline
       smooth & transverse at 3 points & $\mathrm{I}_0^*$  \\
       smooth & transverse at 1 point; tangent at 1  point   &   $\mathrm{I}_1^*$  \\
    nodal & transverse at 1 point; multiplicity 2 at node &  $\mathrm{I}_2^*$   \\
       cuspidal  & transverse at 1 point; multiplicity 2 at the cusp &   $\mathrm{I}_3^*$  \\
       $\mathrm{A}_2$ & transverse at 1 point; multiplicity 2 at this singular point & $\mathrm{I}_4^*$  \\

    \hline
    \end{tabu}

\end{table}

\begin{remark}[see Table \ref{table:fibers}]\label{rmk:table} Note that $\mathrm{I}_n^*$ for $n > 4$ do not appear. If the surface component is not normal, then it is a $j$-invariant infinity component. In this case, $Br$ is the union of a non-reduced double line and a line. The double locus $D$ cannot be contained in a component of $Br$, and there has to be a marked fiber (with some multiplicity) passing through the point where the two components of $Br$ intersect. This is precisely where the singular $\mathrm{N}_1$ fiber lies. The other marked fibers correspond to other lines, except these cannot be contained in the double locus. Finally, we note that there is always at least one point where $D \cap Br$ is transverse -- these points must match up when gluing the two components of the singular quadric surface. If $D \cap Br$ is transverse at three points, it must be on both sides, giving two normal components of the del Pezzo surface glued along $\mathrm{I}_0^*$ fibers. Otherwise, there is a unique point where $D \cap Br$ is transverse. In this case, both sides have a unique point that matches up, and the remainder of the intersection points of $D \cap Br$ have multiplicity two -- these points are also identified.
\end{remark}

\subsection{An example}
 We give an example of the gluing construction for the base of the double cover structure on a surface of type $B_{\mathrm{II}}$ (see Figure \ref{fig:example}).

 \begin{figure}[htp!]
  \centering
 \includegraphics[scale=.75]{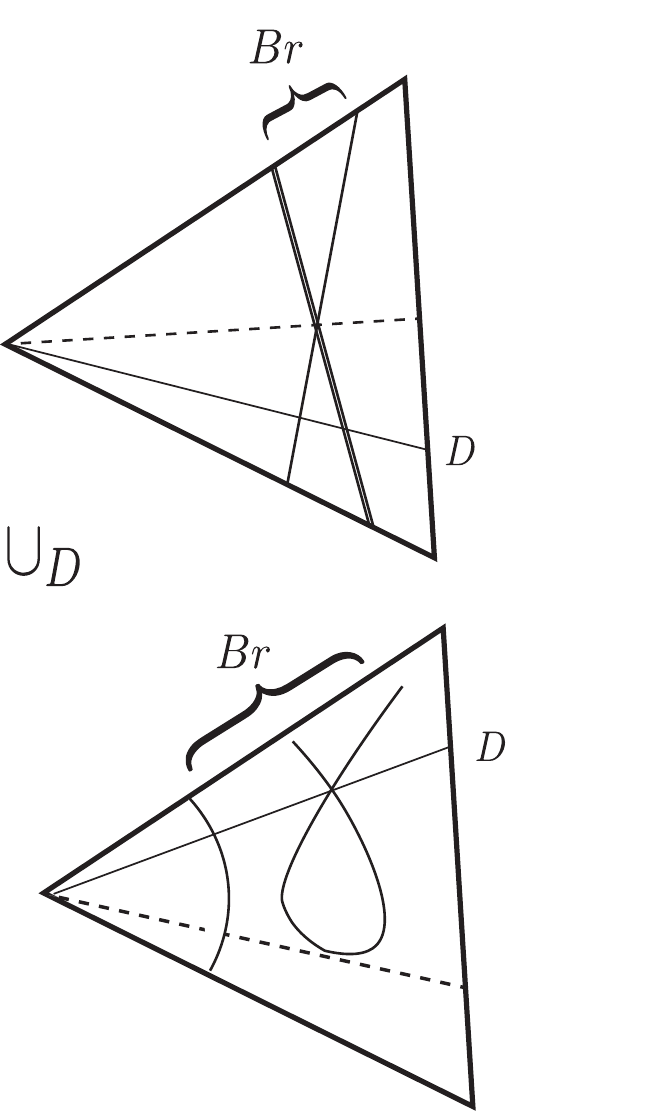}
 \caption{This figure shows the two $\bP^2$ components which glue together to give the degenerate quadric cone. The divisor $D$ denotes the double locus, and $Br$ denotes the (reducible) branch locus.}\label{fig:example} \label{fig:example}
 \end{figure}

 The two components of the base (both $\bP^2$) are glued along the line $D$, and the branch loci are denoted by $Br$. Note that the components of the branch loci must match up with the components of the same multiplicity along D on each surface. The dotted lines correspond to marked fibers: on the top component there is an $\mathrm{N}_1$ cusp and on the bottom component an $\mathrm{I}_1$. The non-reduced branch locus on the top surface corresponds to the double cover being isotrivial $j$-invariant $\infty$ with self intersection above the non-reduced component, and the fact that $D$ goes through a node on $Br$ on the bottom surface means that the gluing fiber above $D$ is a type $\mathrm{I}_2^*$ fiber. In particular, this depicts the branch data for a surface of type $B_{\mathrm{II}}$ corresponding to an isotrivial $j$-invariant $\infty$ component glued by twisted $\mathrm{N}_1/\mathrm{I}_2^*$ fibers to a normal $\mathrm{I}_2^*/4\mathrm{I}_1$ surface component.

\section{Proof of the main theorem} 

We are now ready to prove the following. 

\begin{theorem}\label{thm:maintheorem} The isomorphism $\calD^{1, s} \to \calQ_3^{s}$ extends to a separated morphism $\calR \to \overline{\calQ}_3$ such that the relative coarse map $\calR^c \to \overline{\calQ}$ is a monomorphism. \end{theorem}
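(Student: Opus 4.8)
The plan is to perform the double cover construction of Proposition~\ref{prop:cover} in families over all of $\calR$, including the boundary, and then read off the triple $(Q,C,L)$ classified by $\overline{\calQ}$. Let $\pi \colon \calX \to \calR$ denote the universal family of broken degree one del Pezzo surfaces. On each fiber $X_t$ the sheaf $\calO_{X_t}(-2K_{X_t})$ is Cartier and ample by Theorem~\ref{thm:recallboundary}, and Theorem~\ref{thm:doublecover} shows that $h^0(X_t, -2K_{X_t}) = 4$ uniformly (the boundary fibers are double covers $\varphi \colon X_t \to Q$ with $\varphi^*H = -2K_{X_t}$, so $H^0(-2K_{X_t})$ is identified with the four-dimensional $H^0(Q, \calO_Q(1))$). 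By flatness and Grauert's theorem, $\calE := \pi_*\calO_\calX(-2K_{\calX/\calR})$ is therefore locally free of rank four and compatible with base change. The evaluation map $\pi^*\calE \to \calO_\calX(-2K_{\calX/\calR})$ is fiberwise surjective, again by Theorem~\ref{thm:doublecover} (which exhibits $|-2K_{X_t}|$ as basepoint free), so $-2K_{\calX/\calR}$ is globally generated and we obtain an $\calR$-morphism $\varphi \colon \calX \to \bP := \bP(\calE)$. Fiberwise this is a $2\colon 1$ map onto a quadric cone $Q_t \subset \bP^3$ --- smooth in the interior and equal to $\bP^2 \cup_l \bP^2$ on the boundary --- branched along a complete intersection with a cubic. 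Thus $\varphi$ cuts out a family $(\calQ, \calC) \to \calR$ of such pairs, hence a morphism $\calR \to \calQ_3$.

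To produce the $\calM_v$-factor, note that the marked fibers $F_i$ of the stable pair structure on $\calX$ push forward under $\varphi$ to the marked lines $l_i$ on $\calQ$, giving the boundary divisor $\calL = a\sum(m_i - 1)l_i$ and a stable pair $(\calQ, \calL)$ over $\calR$, i.e.\ a morphism $\calR \to \calM_v$. Together these give $\calR \to \calQ_3 \times \calM_v$, whose restriction to the dense open substack $\calD^{1,s}$ is the graph of $\varphi \colon \calQ_3^s \to \calM_v$ under the identification $\calD^{1,s} \cong \calQ_3^s$. Since $\calR$ is reduced and irreducible with $\calD^{1,s}$ dense, the morphism factors set-theoretically, and hence (the target carrying its reduced closure structure) scheme-theoretically, through $\overline{\calQ}$ as in Definition~\ref{def:compactification}, yielding the extension $\calR \to \overline{\calQ}$. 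Separatedness is then automatic: $\calR$ is proper, hence separated over the base field, and by the cancellation property for separated morphisms, separatedness of the structure morphism $\calR \to \Spec k$ forces $\calR \to \overline{\calQ}$ to be separated.

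It remains to show the relative coarse map is a monomorphism. Since $\calR$ has finite inertia, the morphism $\calR \to \overline{\calQ}$ factors through its relative coarse space $\calR \to \calR^c \to \overline{\calQ}$ with $\calR^c \to \overline{\calQ}$ representable (Abramovich--Olsson--Vistoli). A representable morphism is a monomorphism exactly when it induces a fully faithful functor on $T$-points for every scheme $T$. I would establish this by producing a functorial inverse to the double cover construction: given a family $(\calQ_T, \calC_T, \calL_T)$ classified by $\overline{\calQ}$, the double cover of $\calQ_T$ branched along $\calC_T \cup l_T$ (with $l_T$ the double locus of the degenerate cone, empty in the smooth case) reconstructs a broken del Pezzo family over $T$. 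The reconstruction carried out in the proof of Theorem~\ref{thm:doublecover} shows this family is unique up to a unique isomorphism once the sheet-swap $\mu_2$ --- precisely the relative inertia contracted by $\calR \to \calR^c$ --- has been divided out: every automorphism of $X$ preserves $|-2K_X|$ and so descends to an automorphism of $(Q,C,L)$, while conversely any automorphism of $(Q,C,L)$ lifts to the canonical double cover up to the Galois involution. This mirrors the inverse construction used in the interior (the Corollary following Proposition~\ref{prop:cover}), now extended across the boundary, and gives full faithfulness on $T$-points, hence that $\calR^c \to \overline{\calQ}$ is a monomorphism.

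I expect the main obstacle to lie in the injectivity-on-points, equivalently the fullness, part of the last step along the boundary: one must check that the triple $(Q,C,L)$ --- in particular the curve $C$ together with the different $\mathrm{Diff}_l(C)$ it cuts on the double locus --- rigidifies the gluing $Y_1 \cup_G Y_2$, so that distinct broken del Pezzo surfaces cannot produce the same triple. The uniqueness of the gluing isomorphism $\tau$ matching the marked point and the different on each side, established in Theorem~\ref{thm:doublecover}, is the crucial input, and for the non-normal $j$-invariant $\infty$ components one must further verify that the pinch-point branch data recorded by a non-reduced line still pins down the surface uniquely. This is exactly where the explicit classification of Theorem~\ref{thm:doublecover} and the dictionary of Remark~\ref{rmk:table} do the essential work.
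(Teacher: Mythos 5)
Your proposal follows the same architecture as the paper's proof (pushforward of $\calO(-2K)$, global generation, map to a $\bP^3$-bundle, factoring through $\overline{\calQ}$, relative coarse space for the monomorphism claim), but it has a genuine gap at the one step where the paper does real work. You assert that on a boundary fiber the double cover $\varphi\colon X_0 \to Q$ of Theorem \ref{thm:doublecover} ``identifies'' $H^0(X_0,\calO_{X_0}(-2K_{X_0}))$ with the four-dimensional $H^0(Q,\calO_Q(1))$. What Theorem \ref{thm:doublecover} actually provides is an injection $\varphi^*\colon H^0(Q,\calO_Q(1)) \hookrightarrow H^0(X_0,\calO_{X_0}(-2K_{X_0}))$, i.e.\ a four-dimensional \emph{sub}series $V$; that $V$ is \emph{all} of $H^0$ is precisely what must be proved, and your argument uses this (to get constancy of $h^0$, hence Grauert) rather than proving it. Nor does it follow from the standard double-cover formalism $\varphi_*\calO_{X_0}\cong\calO_Q\oplus\calL^{-1}$: on the boundary the branch locus contains the non-Cartier double locus $l$ of $Q = \bP^2\cup_l\bP^2$, and $\varphi$ is not even flat along $l$ (in the local model $k[y,z,t]/(yz)$ over $k[x,u,t]/(xu)$ with $x\mapsto y^2$, $u\mapsto z^2$, the fiber of $\varphi_*\calO_{X_0}$ at a point of $l$ is three-dimensional). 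One can still get $h^0=4$ by a direct computation from the exact sequence $0\to\varphi_*\calO_{X_0}\to\varphi_*\calO_{Y_1}\oplus\varphi_*\calO_{Y_2}\to\calO_l\to 0$, but some such argument is required. The paper instead proves $H^1(X_0,\calO_{X_0}(-2K_{X_0}))=0$ on the slc boundary surfaces by quoting vanishing theorems for slc pairs, applies cohomology and base change, and degenerates from the smooth generic fiber over a DVR to conclude $h^0=4$; only then does the dimension count force $V=H^0$.

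A second, related defect is the appeal to Grauert's theorem, which requires a \emph{reduced} base. Reducedness of $\calR$ is neither established by you nor needed by the paper, and, more seriously, to define a morphism of stacks $\calR\to\calQ_3$ the pushforward construction must be functorial for families over arbitrary test schemes $T$, including non-reduced and Artinian ones, where Grauert says nothing. The paper's route through $H^1$-vanishing plus cohomology and base change is exactly what makes the construction work over an arbitrary base. Your treatment of the final monomorphism step (a functorial inverse given by the branched double cover, extending the interior argument of the corollary after Proposition \ref{prop:cover}) is in the spirit of the paper's and is reasonable, but it inherits the same caveat: along the boundary, ``the double cover of $\calQ_T$ branched along $\calC_T\cup l_T$'' is not an off-the-shelf cyclic cover, since $l_T$ is not Cartier, and must be interpreted through the explicit gluing construction of Theorem \ref{thm:doublecover}.
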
 

\begin{proof} For any surface pair $(X,F)$ parametrized by $\calR$, we have $H^1(X, \calO_X(-2K_X)) = 0$. Indeed for degree one del Pezzo surfaces, we saw this in Section \ref{sec:background}, and for broken del Pezzos this follows from \cite[Theorem 5.1]{master} (a vanishing theorem) and also \cite[Theorem 1.7]{fujino}. Thus given any family $\pi : (X, F) \to T$ of broken degree one del Pezzo surfaces, the vector bundle $\pi_*\calO_X(-2K_{X/T})$ is locally free and its formation is compatible with basechange. In particular, taking $T$ to be the spectrum of a DVR with generic fiber $X_\eta$ a smooth degree one del Pezzo and central fiber $X_0$ a broken surface of type $B_{\mathrm{I}}$, $B_{\mathrm{II}}$, or $B_{\mathrm{III}}$, we see that 
$$
\dim H^0(X_0, \calO_{X_0}(-2K_{X_0})) = 4.
$$

In Theorem \ref{thm:doublecover}, we constructed an explicit rank four sublinear series $V \subset H^0(X_0, \calO_{X_0}(-2K_{X_0}))$ such that $\varphi_V : X_0 \to \mathbb{P}^3$ is the double cover of the singular quadric cone $Q = \bP^2 \cup_l \bP^2$ as described above. By a dimension count this $V$ must be all of $H^0(X_0, \calO_{X_0}(-2K_{X_0}))$ so we conclude that the complete linear series $|-2K_{X_0}|$ induces the double cover map described in Theorem \ref{thm:doublecover} for any broken del Pezzo surface of type $B_{\mathrm{I}}$, $B_{\mathrm{II}}$, or $B_{\mathrm{III}}$. In particular, $\calO_{X_0}(-2K_{X_0})$ is globally generated so in families $\pi : (X, F) \to T$, the line bundle $\calO_X(-2K_{X/T})$ is $\pi$-generated and the surjection
$$
\pi^*\pi_*\calO_X(-2K_{X/T}) \to \calO_X(-2K_{X/T})
$$
induces a map $\varphi : X \to \bP(\pi_*\calO_X(-2K_{X/T})) =: \bP$ to a $\bP^3$-bundle over $T$ which is a double cover over a family of quadric cones $\calQ \subset \bP$ branched over the complete intersection of $\calQ$ with a cubic. This gives a morphism of algebraic stacks $\calR \to \calQ_3$ and $\calR$ is constructed as a substack of $\calM_v$ thus we get a map
$
\calR \to \calQ_3 \times \calM_v
$
extending the graph $\calD^{1,s} \cong \calQ_3^s \to \calQ_3 \times \calM_v$. Since $\calD^{1,s}$ is dense and open in $\calR$, this morphism must factor through the closure $\overline{\calQ} \subset \calQ_3 \times \calM_v$, giving the claimed morphism $\calR \to \overline{\calQ}$. Equivalently, $\overline{\calQ}$ is the scheme theoretic image of the morphism $\calR \to \calQ_3 \times \calM_v$.

Since $\calR$ is separated, so is the morphism $\calR \to \overline{\calQ}$. Moreover, $\calR$ is Deligne-Mumford so the inertia stack is quasifinite. In particular, it follows that the relative inertia stack is in fact \emph{finite}. Let $\calR^c \to \overline{\calQ}$ be the relative coarse moduli space which exists for a morphism of algebraic stacks with finite relative inertia (see \cite[Theorem 3.1]{aov}). Now, by the construction in Theorem \ref{thm:doublecover}, the surface $X$ is determined by branch locus in $Q$ and the marked anti-canonical curves on $X$ are determined by the marked lines $l$ on $Q$. Therefore the representable map $\calR^c \to \overline{\calQ}$ induces an injection on points and thus is a monomorphism. \end{proof}

\bibliographystyle{alpha}
\bibliography{main}

\end{document}